\theoremstyle{plain}
\newtheorem{lemma}{Lemma}
\newtheorem{definition}{Definition}
\newtheorem{corollary}{Corollary}
\newtheorem{proposition}{Proposition}
\newtheorem{theorem}{Theorem}
\newtheorem{question}{Question}
\newtheoremstyle{derp}% <name>
{3pt}% <Space above>
{3pt}% <Space below>
{}% <Body font>
{}% <Indent amount>
{\upshape}% <Theorem head font>
{:}% <Punctuation after theorem head>
{.5em}% <Space after theorem headi>
{}% <Theorem head spec (can be left empty, meaning `normal')>
\theoremstyle{derp}
\newcommand{\Z}{\mathbb{Z}}
\newcommand{\N}{\mathbb{N}}
\title{Trees in positive entropy subshifts}
\author{
Ville Salo\thanks{This research was funded by Academy of Finland grant number 2608073211.} \\
vosalo@utu.fi
}
\begin{document}
\maketitle

\begin{abstract}
I give a simple proof for the fact that positive entropy subshifts contain infinite binary trees where branching happens synchronously in each branch, and that the branching times form a set with positive lower asymptotic density.
\end{abstract}

\epigraph{The proof is trivial! Just view it as a \\
\textbf{rational metric space} \\
whose elements are \\
\textbf{countable combinatorial games}}{\url{www.theproofistrivial.com/} \\ (generated during CANT 2016)}

\section{Introduction}

In topological dynamics, the topological entropy of a dynamical system measures the information in orbits, by counting the exponential growth rate of different partial orbits with a certain of accuracy. One example of a dynamical system is a \emph{subshift}, a dynamical system whose points are infinite words over a finite alphabet $A$, and the dynamics is the left shift. One-way entropy can arise in a subshift when, somewhere in partial orbits, we literally see all distinct words $A^n$ up to $n$. One may ask to what extent entropy always arises from such ``free choices''.

Two formal notions that most directly capture this idea are the \emph{independence entropy} of a subshift and \emph{independence sets}. The former measures how much entropy comes from ``set-theoretic hyperrectangles'', namely sets of the form $A_1 \times A_2 \times \cdots A_n$ which are contained in the language of the subshift as $n \rightarrow \infty$, where $A_i \subseteq A$ are any subsets of the alphabet and we consider consecutive letters only, see \cite{LoMaPa13}, (p.~303) for the precise definition. Positive entropy does not imply positive independence entropy (see Proposition~\ref{prop:NoIndependenceEntropy}).

Independence sets, on the other hand, are defined similarly in \cite{KuKwLi14a,FaKuKwLi15}, but we allow gaps between the positions where the prescribed letters appear, and there is no constraint on the words seen in the gaps between the positions. More precisely, in the case of a subshift $X$ with a binary alphabet $A = \{0,1\}$, an \emph{independence set} is a set $N \subseteq \N$ such that $X|_N = \{0,1\}^N$. It turns out that the existence of an independence set of positive lower asymptotic density does capture positive entropy, see \cite{KuKwLi14a,FaKuKwLi15} for a proof in this binary subshift case, and \cite{GlWe95,HuYe06,KeLi07} for analogous results in more general settings.

The following theorem gives a new proof of this result (see Corollary~\ref{cor:IndependenceSet}), but it is stronger, as there is an additional constraint on how the independent choices must be realized. It also applies in the case of a general alphabet.

\begin{theorem} 
\label{thm:Main}
A subshift $X \subseteq A^\N$ has positive entropy if and only if it contains a steadily branching binary tree.
\end{theorem}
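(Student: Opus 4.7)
The plan is to handle the two directions separately. The easy direction, from tree to positive entropy, is essentially counting: if the branching set $T \subseteq \N$ has positive lower asymptotic density $\delta > 0$, then the $2^{|T \cap [0,n)|}$ leaves at depth $|T \cap [0,n)|$ of the tree are all distinct length-$n$ words in $L_n(X)$, so $h(X) \geq \delta \log 2 > 0$.

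For the converse, assume $h(X) > 0$, so $|L_n(X)| \geq 2^{hn - o(n)}$. My strategy is a compactness argument. I would first prove a finite version: for each $N$, the language $L_N(X)$ contains a steadily branching binary tree of depth at least $cN$ for some $c = c(h) > 0$, with branching set having lower density at least $c$ on each initial segment of $[0, N)$. A K\"onig's lemma argument applied to the rooted tree of such partial finite trees under prefix-restriction then yields an infinite tree with the required positive lower density of branching times. The lower-density condition on every initial segment (rather than only at $N$) is crucial to ensure the limit tree inherits positive lower density.

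The hard step is the finite extraction lemma; the synchronization constraint is what makes this non-trivial. A naive top-down greedy approach (split the current sub-language at some branching position and recurse on each child) fails because after splitting, the two sibling sub-languages may have no common good next branching position, and synchrony forbids independent choices per branch. Indeed small examples such as $L = \{000, 011, 110, 111\}$ show that the maximum depth of a steadily branching tree inside an arbitrary $L \subseteq A^n$ can be strictly smaller than $\log_2 |L|$, so some structural input beyond mere cardinality is needed. I would attempt an inductive argument with amortized accounting: show that at a positive density of positions one can simultaneously split all currently surviving branches with only a controlled loss of entropy, using that high entropy forces many branching positions in every sub-language and that these sets must overlap substantially by pigeonhole. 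Alternatively, one could exploit the shift-invariance of $X$ to re-align branchings appearing in different orbits into synchronized ones, or pass through an ergodic measure of positive Kolmogorov--Sinai entropy to construct the tree measure-theoretically via conditional distributions and then descend back to a combinatorial tree inside the language.
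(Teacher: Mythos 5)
The easy direction is fine, and your diagnosis of the difficulty in the converse is accurate (your example $L=\{000,011,110,111\}$ is correct: the best synchronized tree there has only one branching, so cardinality alone does not control synchronized branching depth). But the proposal has a genuine gap exactly where you locate the "hard step": the finite extraction lemma is never proved, and it \emph{is} the theorem. What you offer in its place --- "amortized accounting", "these sets must overlap substantially by pigeonhole", "re-align branchings using shift-invariance", or "construct the tree measure-theoretically and descend back" --- are directions, not arguments. The pigeonhole idea in particular does not obviously close: after a few synchronized splits you have exponentially many surviving sub-languages, and the intersection of their sets of good branching positions can degrade too fast; you give no mechanism controlling this loss, and your own example shows that the naive entropy bookkeeping can genuinely lose ground. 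Likewise, extracting a \emph{synchronized} combinatorial tree from an ergodic measure of positive entropy is not a routine descent; the synchronization is precisely what needs a new idea. Finally, even granting a finite lemma, your K\"onig argument needs density bounded below on \emph{every} initial segment, which requires an additional uniformization step (in the paper this is the subadditivity/"usual addendum" argument applied to the maximal prefix sums), and this too is asserted rather than supplied.

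The missing ingredient in the paper is a counting identity for the set of \emph{branching structures}: if $W(L)$ denotes the set of branching structures of finite synchronized trees inside $L \subseteq A^n$, then $|W(L)| = |L|$ (proved by splitting $L = 0L_0 \sqcup 1L_1 \sqcup \cdots$ and exchanging a sum; this is a Sauer--Shelah/shattering-type fact), and $W(L)$ is hereditary. Positive entropy then gives exponentially many branching structures, a binomial estimate shows some of them must have a positive proportion of branchings, and the subadditive-limit argument upgrades this to a single infinite branching structure whose every prefix has density bounded below --- which is the steadily branching tree. Your proposal contains no substitute for this counting step (indeed your small example shows why some such structural input beyond $|L|$-counting of words in a single sub-language is unavoidable), so as it stands the hard direction is not established.
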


We can interpret $A^\N$ as the boundary of an $|A|$-ary tree and $X$ a closed subset of this boundary. A closed subset of the boundary of a tree is determined by finite subpaths in the tree, and it is in this sense that $X$ contains the binary tree. Steadily branching means that the embedded binary tree branches at a uniform sequence of times which form a set of positive lower asymptotic density. A formal definition is given in Section~\ref{sec:Winning}.

Positive entropy refers to topological entropy, and in the context of subshifts, means the exponential growth of the number of words. Countable subshifts can have a growth rate arbitrarily close to exponential (see Proposition~\ref{prop:Countable}), but clearly contain trees with only finitely many branchings. Thus, if we weaken the assumption of positive entropy, the existence of a steadily branching binary tree can fail in a very strong sense.

It is also easy to find examples with positive entropy where it is not possible to find a tree where branchings happen a syndetic set of times (see Proposition~\ref{prop:NoSyndetic}). There are also examples where no individual tree which branches at a uniform sequence of times ``captures'' all of the entropy, as can be seen from \cite{LoMaPa13}, (Example~2). These remarks show that we cannot expect to essentially improve the conclusion about the tree under the assumption of positive entropy.

The proof of Theorem~\ref{thm:Main} is straightforward using the standard results about the winning shift, which is the set of winning turn orders in a certain word-building game associated to $X$. The winning shift is known to have the same entropy as $X$. This implies it has a point with positive density, and this point is the branching structure of a steadily branching tree in~$X$. This note is simply a self-contained elaboration of this deduction. I give a combinatorial proof directly from the definitions, and also include an ergodic theoretic proof.

It is easy to deduce a two-sided version of the result as well. For $x \in A^{-\N}$ and $y \in A^\N$, write $x \cdot y \in A^\Z$ for the configuration with $x$ and $y$ back to back.

\begin{theorem}
\label{thm:Z}
A subshift $X \subseteq A^\Z$ has positive entropy if and only if for some $x \in A^{-\N}$, the set $\{y \in A^\N \;:\; x \cdot y \in X \}$ contains a steadily branching binary tree.
\end{theorem}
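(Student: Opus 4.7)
The plan is to reduce to Theorem~\ref{thm:Main} by applying its proof to the closed set $Y_x := \{y \in A^\N : x \cdot y \in X\}$ for a suitably chosen past $x$. The backward direction is routine: if $Y_x$ contains a steadily branching binary tree whose branching times have lower density $d > 0$, then the number of length-$n$ prefixes $|L_n(Y_x)|$ grows at least as $2^{(d - o(1))n}$; since $L_n(Y_x) \subseteq L_n(X)$, this forces $h(X) > 0$.

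For the forward direction, assume $h(X) > 0$. The first step is to find a past $x \in A^{-\N}$ such that $Y_x$ has positive exponential prefix growth, i.e.\ $|L_n(Y_x)| \geq 2^{h'n}$ for some $h' > 0$ and all large $n$. This can be done combinatorially: a pigeonhole on the factorization $L_{2n}(X) \subseteq L_n(X) \cdot L_n(X)$ produces, for each $n$, a length-$n$ past prefix with exponentially many length-$n$ continuations, and a diagonal compactness argument extracts a limit $x \in A^{-\N}$ with the desired growth of $L_n(Y_x)$. An ergodic alternative uses an invariant measure of maximal entropy together with the conditional Shannon--McMillan--Breiman theorem applied to the disintegration along the past.

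The second step is to extract a steadily branching binary tree from $Y_x$ by re-running the winning-shift argument of Theorem~\ref{thm:Main}. Although $Y_x$ is generally not shift-invariant, the winning-shift construction is combinatorial and uses only the prefix-language of the underlying space together with the exponential growth of its length-$n$ prefix sets; the produced tree lies in $Y_x$ by construction. The hard part will be verifying this last claim, namely that the winning-shift argument of Theorem~\ref{thm:Main} carries over without shift-invariance of the underlying space. Given the game-theoretic nature of the winning shift I expect this to be routine, but it is the only step that requires care, and the rest of the deduction is a direct translation of the one-sided argument.
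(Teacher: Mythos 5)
The key step you defer---that the winning-shift argument of Theorem~\ref{thm:Main} ``uses only the prefix-language of the underlying space together with the exponential growth of its length-$n$ prefix sets''---is not routine; as stated it is false, and this is a genuine gap. Shift-invariance is used essentially in the proof of Theorem~\ref{thm:Main}: both for the subadditivity of the maximal word-sums $s_n$ (the second half of a split word must again be a word of $Y$) and in the ``usual addendum'', where the low-density-prefix hypothesis is applied repeatedly along shifted tails, which must again be points of the winning shift; for a non-shift-invariant $Y_x$ the set of branching structures $W(Y_x)$ is not shift-invariant either. Worse, the bare statement your step 2 needs fails: there are closed sets $Z \subseteq \{0,1\}^\N$ with $\liminf_n \frac{\log|L_n(Z)|}{n} > 0$ containing no infinite branching tree at all. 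For instance, with blocks $B_k = [2^k, 2^{k+1})$ let $Z$ be the set of sequences whose $1$s all lie in a single block; then $|L_n(Z)| \geq 2^{n/4}$ for all $n$, but any two branching positions of a tree in $Z$ must lie in the same block (if they lay in different blocks, the two tree elements that share the value $1$ at the earlier branching position would be supported in the earlier block, hence would both vanish at the later branching position, contradicting that they must differ there), so every tree in $Z$ branches only finitely often. So exponential prefix growth of $Y_x$ alone cannot yield a steadily branching tree; you would need extra structure of follower sets, which your plan does not supply. Your step 1 also has a gap in its combinatorial version: a diagonal limit of the finite words $u_n$ does not inherit ``many continuations'', since $L_m(Y_x) = \bigcap_j \{v : x_{[-j,-1]}v \in L(X)\}$ shrinks as $j$ grows and you only control finitely many suffix lengths of each $u_n$ at a time. (The conditional SMB alternative can likely be made to work, but it is only gestured at.)

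The paper avoids both problems by not fixing the past in advance. It applies Theorem~\ref{thm:Main} to the shift-invariant one-sided subshift $X_R$ of right tails of $X$, obtaining a steadily branching tree with branching structure $y$ of lower density $\alpha$; it then shows that for every $n$ one can cut $y = wx$ with $|w| \geq n$ so that all prefixes of the tail $x$ of length at most $n$ still have density at least $\alpha$, whence the subtree below a single node at depth $|w|$ is a (long, finite) steadily branching tree following a word of length $|w|$; letting $n \to \infty$ and passing to a limit by compactness produces the infinite past and the tree simultaneously. In other words, the past is extracted as a byproduct of the tree rather than chosen first, which is exactly what sidesteps the two gaps above. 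To salvage your plan you would have to prove that some well-chosen past inherits not just exponential word counts but a tree (or winning-shift) structure, and that is essentially the content of the paper's cutting-and-compactness step.
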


The winning shift was introduced in \cite{SaTo14c}, and has also been studied in \cite{PeSa19}. We found out while working on \cite{PeSa19} that a concept equivalent to the winning shift was discovered in the set systems setting already in 2002 \cite{AnRoSa02} (even if ostensibly only for binary words), and I found out while working on the present paper that \cite{AnRoSa02} has been applied by \mbox{dynamicists \cite{KuKwLi14a,FaKuKwLi15}} (this paper is independent from \cite{SaTo14c}, but almost contemporary), and their proof that this gives large independence sets essentially boils down to the same proof as I give here. Nevertheless, I feel that though Theorem~\ref{thm:Main} is an interesting statement about subshifts, to our knowledge the statement does not appear in the literature, and its proof through winning shifts is worth making explicit. Furthermore, unlike Theorem~\ref{thm:Main}, the statement about independence sets does not trivially generalize to non-binary alphabets.

As discussed, I obtain a new proof of the fact entropy implies the existence of certain types of independence sets in the case of a binary alphabet. 

\begin{corollary}
\label{cor:IndependenceSet}
Let $X \subseteq \{0,1\}^\N$ be a subshift. Then, $X$ has positive entropy if and only if it has an independence set of positive lower asymptotic density.
\end{corollary}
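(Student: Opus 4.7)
The plan is to derive this corollary directly from Theorem~\ref{thm:Main}, treating the two directions separately and exploiting that the alphabet is binary throughout.

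For the forward implication, I would invoke Theorem~\ref{thm:Main} to embed a steadily branching binary tree into $X$. Let $N \subseteq \N$ denote the set of branching times, which by hypothesis has positive lower asymptotic density. At each time $n \in N$, the tree splits into two subtrees; since the alphabet has only two letters, these subtrees must witness the two distinct values $0$ and $1$ at position $n$. Following the unique branch that agrees with a prescribed $\epsilon : N \to \{0,1\}$ at every branching time yields a configuration $x \in X$ with $x|_N = \epsilon$, showing $X|_N = \{0,1\}^N$. Hence $N$ is an independence set of positive lower asymptotic density.

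The reverse implication is a direct entropy count. Suppose $N \subseteq \N$ is an independence set with $\liminf_n |N \cap [0,n-1]|/n \geq d > 0$. Writing $L_n(X)$ for the set of length-$n$ words appearing in $X$, the restriction map onto coordinates $N \cap [0,n-1]$ factors through $X \to L_n(X)$ and is surjective by the independence property, so $|L_n(X)| \geq 2^{|N \cap [0,n-1]|}$. Taking logarithms and dividing by $n$ gives topological entropy at least $d \log 2 > 0$.

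The main step, insofar as there is one, is the translation from ``synchronously branching binary tree'' into the combinatorial identity $X|_N = \{0,1\}^N$. This is where the binary alphabet is used in an essential way: for a larger alphabet, a binary tree can only exhibit two out of $|A|$ letters at each branching time, which is exactly why the independence-set formulation does not admit the same direct generalization even though Theorem~\ref{thm:Main} itself does.
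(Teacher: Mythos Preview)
Your proof is correct and follows essentially the paper's approach: both extract the steadily branching tree guaranteed by Theorem~\ref{thm:Main}, take $N$ to be its set of branching times, and exploit that a binary branching over a binary alphabet must exhibit both letters at the branching coordinate. The only cosmetic difference is in the final verification: you construct directly, for each prescribed $\epsilon \in \{0,1\}^N$, the unique branch of the tree realizing $\epsilon$ on $N$, whereas the paper instead defines a map $\phi(w) = x^{z^w}|_{N_n}$ on finite truncations, checks injectivity, deduces surjectivity by counting, and then passes to infinite $N$ by compactness---your route is slightly more direct.
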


Again, I mention that this result is proved in \cite{KuKwLi14a,FaKuKwLi15}, and that generalizations appear in \cite{GlWe95,HuYe06,KeLi07}. To our knowledge, the first reference where the Sauer-Shelah lemma appears in a dynamics context is \cite{GlWe95}. The most naive generalization of Corollary~\ref{cor:IndependenceSet}, the existence of a high density set $N$ with $X|_N = A^N$, fails for a non-binary alphabet $A$ (even for trivial reasons, by artificially increasing the alphabet of the subshift). However, one can state an equivalent condition for having higher entropy in this way \cite{Be00}, (Theorem~8.3), \cite{FaKuKwLi15}, (Theorem~2). (The non-trivial direction of this equivalence also appears in \cite{SaTo14c}, (Proposition~5.9).)

As I show in this note, winning shifts say something stronger than Corollary~\ref{cor:IndependenceSet} in the case of one-dimensional subshifts, but it is not clear how to generalize them to other settings, at least in a way that allows seeing them naturally as a dynamical system.

\begin{question}
Does every expansive system of positive entropy contain a steadily branching binary tree (and what does that even mean)? More generally, can the notion of winning shift be extended to general (expansive) systems? Group/monoid actions? 
\end{question}

% The webpage \url{www.theproofistrivial.com} is an online resource that suggests high-level proof strategies in a variety of mathematical domains. The present note was inspired by the following quote from this webpage:
% ``The proof is trivial! Just view it as a rational metric space whose elements are countable combinatorial games.'' The page was accessed during the Combinatorics, Automata and Number Theory conference that took place between 28th November 2016 and 2nd December 2016.

\section{Definitions}

Let $\N \ni 0$ be the natural numbers. An \emph{alphabet} is a finite set $A$. \emph{Words} are elements of the free monoid generated by $A$, denoted by $A^*$. Elements of $A$ are seen as belonging to $A^*$ when convenient, $u \cdot v$ denotes the multiplication in the free monoid (formal concatenation). The \emph{length} $|w|$ of a word is defined in the obvious way. Write $A^n = \{w \in A^* \;:\; |w| = n\}$. % and for $w \in A^*, L \subset A^*$ we define $u \cdot L = \{u \cdot v \;|\; v \in L\}$.

The set $A^\N$, for a finite alphabet $A$ (or later $A^\Z$), always carries the product topology, and it is homeomorphic to Cantor space if $|A| \geq 2$. A \emph{subshift} is a closed subset of $A^\N$ for a finite alphabet $A$ (or later $A^\Z$) satisfying $\sigma(X) \subseteq X$ where $\sigma$ is the \emph{shift} $\sigma(x)_i = x_{i+1}$. The \emph{words of a subshift} $X \subseteq A^\N$ are the words $w \in A^*$ such that $x|_{[0,|w|-1]} = w$ for some $x \in X$. The words of a subshift form its \emph{language}. Writing $L_n$ for the intersection of the language of $X$ with $A^n$, the \emph{(topological) entropy} of a subshift $X$ is the exponential growth rate of the number of words, $h(X) = \lim_{n \rightarrow \infty} \frac{\log |L_n|}{n}$ (which exists by the subadditivity of $\log |L_n|$). The \emph{lower asymptotic density} of $N \subseteq \N$ is $\liminf_k \frac{|N \cap [0,k-1]|}{k}$.

Two subshifts $X \subset A^\N, Y \subset B^\N$ are \emph{topologically conjugate} if there is a shift-commuting homeomorphism $\phi : X \to Y$. A subshift is \emph{countable} if it countably has many points. A set $N \subseteq \N$ is \emph{syndetic} if $\N \subseteq N+[-n,n]$ where $+$ denotes the Minkowski sum.

\section{The Winning Shift}
\label{sec:Winning}

For $z, b \in \N^\N$, denote $z \leq b \iff \forall i: z_i \leq b_i$.

\begin{definition}
Let $A$ be a finite alphabet and let $X \subseteq A^\N$. A \emph{tree in $X$ with branching structure $b \in \N^\N$} is a set of sequences $(x^z)_{\N^\N \ni z \leq b}$ where each $x^z$ is an element of $X$, and:
\[ x^z_{[0,i)} = x^{z'}_{[0,i)} \iff z_{[0,i)} = {z'}_{[0,i)}. \]
A \emph{steadily branching binary tree} is a tree with branching structure $b \in \{0,1\}^\N$ satisfying $\liminf_{i \rightarrow \infty} \frac{\sum_{j = 0}^{i-1} b_j}{i} > 0$.
\end{definition}

In other words, for distinct sequences $z, z' \in \N^\N$, the first position where $x^z$ and $x^{z'}$ differ is the same as the first position where $z$ and $z'$ differ. For binary $b$, this means that the nonzero positions in $b$ are the positions where our tree must branch in two, and more generally, $b_i = n$ means the tree must branch $n$ times in position $i$, explaining why we call this sequence a branching structure. Note also that if $X \subseteq A^\N$ and $X$ contain a tree with branching structure $b \in \N^\N$, then necessarily $b_i \leq |A|-1$ for all $i \in \N$.

\begin{definition}
Let $X \subseteq A^\N$ be a subshift. Let $W(X)$, the \emph{winning shift} of $X$, be the set of all branching structures of trees in $X$.
\end{definition}

This is defined in a game-theoretic framework in \cite{SaTo14c}: a tree in $X$ with branching structure $z$ can be interpreted as a winning strategy for the first player in a word-building game where on the $i$th turn, the first player picks a set of $z_i+1$ symbols, then the second picks one of them, and the first player wins if the word obtained in the limit is in~$X$. %The configurations $(x^z)_{z \leq b}$ can be interpreted as a winning strategy for the first player in this game. 

It is shown in \cite{SaTo14c} that the winning shift is indeed a subshift. A subshift $Y \subseteq \N^\N$ is \emph{hereditary} \cite{KeLi07} if for all $y' \in \N^\N$ we have:
\[ (y \in Y \wedge \forall i: y'_i \leq y_i) \implies y' \in Y. \]

The following fact is Proposition 5.7 of \cite{SaTo14c} (the hereditarity claim is trivial). The notation differs slightly, in the reference $\tilde W(X)$ is used for what we call $W(X)$, and their $W(X)$ is the same as ours in the binary case, but differs in general.

\begin{lemma}
The subshift $W(X)$ is hereditary and has the same number of words of each length as~$X$.
\end{lemma}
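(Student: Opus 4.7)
The plan is to prove the two claims separately. Hereditarity will be essentially immediate from the definitions; the word-count equality is the Pajor-type identity alluded to in the introduction when the author says \cite{AnRoSa02} ``essentially boils down to the same proof''.

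For hereditarity, I would start with a tree $(x^z)_{z \leq b}$ in $X$ witnessing $b \in W(X)$, and suppose $b' \leq b$ componentwise. The restricted family $(x^z)_{z \leq b'}$ makes sense (since $z \leq b'$ forces $z \leq b$) and the defining biconditional $x^z_{[0,i)} = x^{z'}_{[0,i)} \iff z_{[0,i)} = z'_{[0,i)}$ is inherited verbatim from the larger family, so this restriction is a tree with branching structure $b'$.

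For the counts, I would first reduce to a purely finite statement: the length-$n$ words of $W(X)$ are exactly the $b \in \N^n$ such that there exists a depth-$n$ tree in the language $L_n(X)$ with branching structure $b$. One direction follows because in any tree $(x^z)_{z \leq b}$ the map $z \mapsto x^z_{[0,n)}$ factors through $z_{[0,n)}$, so it exhibits a depth-$n$ subtree in $L_n(X)$. The other direction follows because any word in $L_n(X)$ extends to a point of $X$, so a depth-$n$ tree in $L_n(X)$ can be completed to a tree in $X$ with branching structure $b \cdot 0^\N$. Write $T(S)$ for the set of branching structures of depth-$n$ trees contained in $S \subseteq A^n$; the reduction turns the claim into $|T(L_n(X))| = |L_n(X)|$.

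I would then prove, by induction on $n$, the general identity $|T(S)| = |S|$ for every $S \subseteq A^n$. Splitting $S$ by first letter as $S_a = \{w : aw \in S\}$, a branching structure $(c, b') \in T(S)$ corresponds to choosing $c+1$ distinct letters $a_0,\ldots,a_c \in A$ for which $b' \in T(S_{a_i})$ for every $i$. Hence $(c, b') \in T(S)$ iff $|\{a : b' \in T(S_a)\}| \geq c+1$, and summing first over $c \geq 0$ and then swapping the order of summation gives
\[ |T(S)| = \sum_{b' \in \N^{n-1}} \bigl|\{a \in A : b' \in T(S_a)\}\bigr| = \sum_{a \in A} |T(S_a)|. \]
By the induction hypothesis this equals $\sum_a |S_a| = |S|$, closing the induction. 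I expect the inductive step to be the only real obstacle, and specifically the bookkeeping that turns ``for some $c+1$ distinct letters $a$, $b'$ is a common branching structure of the $S_a$'' into the level-set identity above; everything else is just unpacking definitions.
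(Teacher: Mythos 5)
Your proof is correct and follows essentially the same route as the paper: the hereditarity argument is the same trivial restriction of the tree, and your identity $(c,b') \in T(S) \iff |\{a : b' \in T(S_a)\}| \geq c+1$ followed by the sum swap and induction on $n$ is exactly the paper's computation with $W(L)$ and the first-letter decomposition $L = 0L_0 \sqcup \cdots \sqcup (k-1)L_{k-1}$ (you merely spell out the finite/infinite reduction that the paper asserts in one line).
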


\begin{proof}[Proof sketch]
The hereditarity claim is trivial. For the claim about words, one can define $W(L)$ for $L \subseteq A^n$ analogously to the subshift case (using finite trees). Letting $L \subseteq A^n$ be the language of $X$ intersected with $A^n$, $W(L)$ is the language of $W(X)$ intersected with $A^n$. Now, we write $L = 0 L_0 \sqcup 1 L_1 \sqcup \cdots \sqcup (k-1) L_{k-1}$ and exchange a suitable sum:
\begin{align*}
|W(L)| &= \left|\{ i \cdot w \;:\; |\{j \;:\; w \in W(L_j)\}| > i\}\right| \\
&= \sum_i \sum_{{w \in A^{n-1}}\atop{|\{j \;:\; w \in W(L_j)\}| > i}} 1 \\
&= \sum_{w \in A^{n-1}} \sum_{i = 0}^{|\{j \;:\; w \in W(L_j)\}|-1} 1 \\
&= \sum_{w \in A^{n-1}} |\{j \;:\; w \in W(L_j)\}| \\
&= \sum_j |W(L_j)| = \sum_j |L_j| = |L|.
\end{align*}
The first equality is true by definition, and the penultimate one by induction.
\end{proof}

For a finite word $w \in \N^*$, write $\sum w = \sum_i w_i$ for the sum of the symbols in $w$, and $|w|$ for the length of $w$ as a word. The key to finding steadily branching trees is to study the \emph{density} $\sum w/|w|$ of a word $w$. If $Y \subseteq \{0,1\}^\N$ is a subshift, by $Y^k$ we mean the Cartesian power with the diagonal action, interpreted in an obvious way as an alphabet over the alphabet $\{0,1\}^k$.

We observe a simple combinatorial lemma.

\begin{lemma}
\label{lem:Density}
If a subshift $Y$ has positive entropy and is hereditary, then for some $\beta > 0$ there exist arbitrarily long words $w$ of $Y \cap \{0,1\}^\N$ with $\sum w/|w| \geq \beta$.
\end{lemma}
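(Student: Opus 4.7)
The plan is to exploit hereditarity by pointwise truncating words of $Y$ to binary words, and to show that if every binary word in the language of $Y \cap \{0,1\}^\N$ had asymptotically vanishing density, then the fiber count of the truncation would force the entropy of $Y$ to be zero.

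First I would fix the alphabet $A = \{0, 1, \ldots, k-1\}$ of $Y$ and set $h = h(Y) > 0$, computed with logarithms in base $2$. For each $n$ define the truncation $\pi : L_n(Y) \to \{0,1\}^n$ by $\pi(u)_i = \min(u_i, 1)$. Hereditarity of $Y$ gives $\pi(u) \in L_n(Y)$, and since $\pi(u)$ is binary it also belongs to the language of $Y \cap \{0,1\}^\N$. For a fixed binary word $w$, the fiber $\pi^{-1}(w)$ has size at most $(k-1)^{\sum w}$, because positions with $w_i = 0$ are forced to be $0$ and positions with $w_i = 1$ admit any of the symbols $\{1, \ldots, k-1\}$.

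Now I would argue by contradiction. Suppose the conclusion fails: then for every $\beta > 0$ the binary words of $Y \cap \{0,1\}^\N$ with density at least $\beta$ have bounded length, so for every sufficiently large $n$ every $w \in L_n(Y \cap \{0,1\}^\N)$ satisfies $\sum w < \beta n$. Using the standard binomial bound $\sum_{i \leq \beta n} \binom{n}{i} \leq 2^{H(\beta) n}$ for $\beta \leq 1/2$, where $H$ is the binary entropy function, the fiber estimate yields
\[
|L_n(Y)| \;=\; \sum_{w \in L_n(Y \cap \{0,1\}^\N)} |\pi^{-1}(w)| \;\leq\; 2^{H(\beta) n} \cdot (k-1)^{\beta n}
\]
for all large $n$. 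Taking $\log_2$, dividing by $n$, and letting $n \to \infty$ gives $h \leq H(\beta) + \beta \log_2(k-1)$. Since this holds for every $\beta > 0$ and the right-hand side tends to $0$ as $\beta \to 0^+$, we would conclude $h \leq 0$, a contradiction.

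The main obstacle is simply balancing the two entropy contributions — the $H(\beta)$ term from choosing the support of the binary word, and the $\beta \log_2(k-1)$ term from filling that support with nonzero alphabet symbols — but both vanish as $\beta \to 0$, so the argument works for any $\beta > 0$ with $H(\beta) + \beta \log_2(k-1) < h$. In fact this produces, for all sufficiently large $n$, a word of length $n$ in $L(Y \cap \{0,1\}^\N)$ with density at least such a $\beta$, which is strictly stronger than the stated conclusion.
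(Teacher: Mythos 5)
Your proof is correct and uses essentially the same counting argument as the paper: hereditarity lets you pass to binary truncations, and the scarcity of binary words of density below $\beta$ (at most $2^{H(\beta)n}$ of them) is played against positive entropy. The only real difference is bookkeeping --- the paper transfers positive entropy to the binarized subshift $b(Y)$ via the Cartesian power $b(Y)^k$ before counting, whereas you bound the fibers of the truncation map directly by $(k-1)^{\sum w}$, which incidentally yields the slightly sharper quantitative statement $h(Y) \leq H(\beta) + \beta \log_2(k-1)$ and words of the required density of every sufficiently large length, not just arbitrarily long ones.
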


\begin{proof}
Let $\{0,1,...,k-1\}$ be the alphabet of $Y$. Let:
\[ b(Y) = \{x \in \{0,1\}^\N \;:\; \exists y \in Y: \forall i: y_i \neq 0 \iff x_i \neq 0 \}. \]
Since $Y$ is hereditary, it is easy to see that the number of words in $Y$ of length $n$ is at most the number of words in $b(Y)^k$ of length $n$. Thus, if $Y$ has positive entropy, so does $b(Y)^k$, and thus so does $b(Y)$. Suppose thus that the number of words in $b(Y)$ of every length $n$ is at least $\alpha^n$ for some $\alpha > 0$, as is clearly implied by positive entropy.

The number of words of length $n$ with at most $k$ many $1$s is at most:
\[ \binom{n}{k} 2^k < \left(\frac{n \cdot e}{k}\right)^k 2^k. \]

Setting $k = \beta n$, this becomes $((\frac{n \cdot e}{\beta n})^\beta 2^\beta)^n$ and we observe that
as $\beta \rightarrow 0$, we have $2^\beta \rightarrow 1$ and $(\frac{n \cdot e}{\beta n})^\beta = (\frac{e}{\beta})^\beta \rightarrow 1$, so for small enough $\beta > 0$, we have $(\frac{n \cdot e}{\beta n})^\beta 2^\beta < \alpha$, and thus there must be words of length $n$ in $b(Y)$ with at least $\beta n$ many $1$s, for arbitrarily large $n$. These are words of $Y \cap \{0,1\}^\N$ since $Y$ is hereditary.
\end{proof}

This argument appears also in \cite{Kw13} (Theorem~4.8) (this theorem already includes the statement about lower asymptotic density, which we deduce in the next section).

\section{The Proofs}

\begin{proof}[Proof of Theorem~\ref{thm:Main}]
Obviously, a steadily branching tree implies positive entropy.

For the other direction, let $A \subseteq \N$ be a finite alphabet and $Y \subseteq A^\N$ a subshift. Write $s_n$ for the maximal sum $\sum w$ of a word $w$ of length $n$ in $Y$. This sequence is clearly subadditive, so $\lim_{n \rightarrow \infty} s_n/n$ exists, say $\lim_n s_n/n = \alpha$.

We outline the usual addendum that there must be a configuration $y \in Y$ such that every prefix $w$ of $y$ satisfies $\sum w/|w| \geq \alpha$. Supposing that this is not the case, then every point $X$ has a prefix $w$ such that $\sum w/|w| < \alpha$, and this must happen after a bounded number of steps by compactness, thus there exists $\epsilon > 0$ such that for some $m$, we always find a prefix $w$ of length at most $m$ in any point $y \in Y$, such that $\sum w/|w| < \alpha - \epsilon$. Now, given any long word $w$ of $Y$, we can split it as $w = w^0 w^1 \cdots w^k$ (where $i$ in $w^i$ denotes a superscript, not a power) with $|w^k| \leq m$ and $\sum w^i/|w^i| < \alpha - \epsilon$ for all $i < k$. If $w$ is long enough, then since $\sum uv/|uv| \leq \max(\sum u/|u|, \sum v/|v|)$ for all words $u, v$, we have:
\[ \sum w/|w| < \alpha - \epsilon/2 \]
for all long enough words, a contradiction to $\lim_n s_n/n = \alpha$.

Now, we set $Y = W(X)$ and apply Lemma~\ref{lem:Density}. The lemma implies that $\alpha > 0$ in the above. The point $y \in W(X)$ whose density stays above $\alpha$ gives the branching structure of a steadily branching tree in $X$.
\end{proof}

The ``usual addendum'' is well known. The author learned it from \cite{KaKe97,Je14} and P.\ Guillon; Jeandel attributes it to \cite{Ma99,FeHu10}, and it also appears in \cite{Kw13,FaKuKwLi15} where it is attributed to \cite{Pe88,Ru78}.

The point of this note was to provide a combinatorial proof of Theorem~\ref{thm:Main} from the first principles, as this is easy to do. However, I have included also the proof using ergodic theory (see \cite{DeGrSi76} for a basic reference).

\begin{proof}[Ergodic theoretic proof of Theorem~\ref{thm:Main}]
Obviously a steadily branching tree implies positive entropy. For the other direction, since the winning shift has positive entropy, it admits an invariant measure $\mu$ with positive entropy, thus $\mu([a]) > 0$ for some $a > 0$. By the ergodic decomposition there exists such an ergodic measure, and by the pointwise ergodic theorem, there is a point $y \in W(X)$ where the lower asymptotic density of $a$s is positive, giving the result.
\end{proof}

We conclude with the proofs of Theorem~\ref{thm:Z} and Corollary~\ref{cor:IndependenceSet}.

\begin{proof}[Proof of Theorem~\ref{thm:Z}]
Let $X_R \subseteq A^\N$ be the subshift of the right tails of points in $X$ and apply Theorem~\ref{thm:Main}. Let $y \in \{0,1\}^\N$ be the branching structure of some steadily branching tree and $\alpha$ the lower asymptotic density. It is easy to see that for any $n$, we can find a finite prefix $w$ of $y = wx$ of a length of at least $n$ such that for all prefixes $u$ of $x$ of length at most $n$, $\sum u/|u| \geq \alpha$. Otherwise, by cutting long words into ones of length at most $n$, as in the previous argument, we see that the lower asymptotic density is less than $\alpha$.

The fact we can find such $wx$ as a branching structure implies that the tree corresponding to $x$ can follow at least one word of length $|w|$. Letting $n$ tend to infinity, by compactness, we obtain a steadily branching tree that can follow some left tail.
\end{proof}

\begin{proof}[Proof of Corollary~\ref{cor:IndependenceSet}]
Supposing $X$ has positive entropy, let $y \in W(X) \cap \{0,1\}^\N$ have positive lower asymptotic density. The set $N = \{n \;:\; y_n = 1\}$ is an independence set (which by assumption has positive lower asymptotic density): Suppose $(x^z)_{\N^\N \ni z \leq y}$ form a tree with branching structure $y$, i.e.,\ we have:
\[ x^z_{[0,i)} = x^{z'}_{[0,i)} \iff z_{[0,i)} = {z'}_{[0,i)}. \]
Supposing that $(m_i)_{i \in \N}$ enumerates $N$ in order, let $N_n = \{m_0,...,m_{n-1}\}$, and for $w \in \{0,1\}^n$ let $z^w \in \{0,1\}^\N$ be the characteristic sequence of $\{m_i \;:\; i < |w|, w_i = 1\}$. Clearly $z^w \leq y$ for all $w \in \{0,1\}^n$. The map:
\[ \phi : 2^n \to \{x^z|_{N_n} \;:\; z \leq b\} \]
defined by $\phi(w) = x^{z^w}|_{N_n}$ is injective, because $\phi(u0v)_{m_{|u|}} \neq \phi(u1v)_{m_{|u|}}$. Thus, this map is surjective, giving $X|_{N_n} = \{0,1\}^{N_n}$. By compactness, we have $X|_N = \{0,1\}^N$.
\end{proof}

\section{Proofs of supplementary claims}

In this section, I provide brief proofs for the claims made in the introduction.

\begin{proposition}
\label{prop:NoIndependenceEntropy}
There exists a subshift $X \subset \{0,1\}^\N$ with positive entropy, such that every topologically conjugate subshift $Y \cong X$ has zero independence entropy.
\end{proposition}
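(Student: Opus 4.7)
The plan is to construct a specific binary subshift $X$ and show both positive entropy and the conjugacy-invariant vanishing of independence entropy. Since $h_{\mathrm{ind}}$ is not a priori a conjugacy invariant, the delicate point is establishing the conclusion for every presentation $Y \cong X$ simultaneously, not just for $X$ itself. Note in particular that a simple binary SFT such as the subshift forbidding $11$ and $000$ has $h_{\mathrm{ind}} = 0$ (every would-be free position is forced by its neighbors), yet its higher block presentations exhibit genuine rectangles and hence positive $h_{\mathrm{ind}}$; such naive candidates are therefore insufficient.

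First, I would produce an example. One option is to appeal directly to the examples in \cite{LoMaPa13}, which construct subshifts whose entropy arises from non-rectangular combinatorial structure. A more self-contained route is a random SFT: fix a large $N$, let $\mathcal{F}$ be a random set of forbidden length-$N$ binary words at density $\delta$, and set $X = X_{\mathcal{F}}$. Tuning $\delta$, a standard counting argument guarantees $h(X) > 0$ with positive probability. Simultaneously, a union bound over all potential combinatorial rectangles in the language with at least $\epsilon n$ free coordinates shows, with positive probability, that no such rectangle is contained in $X$; this gives $h_{\mathrm{ind}}(X) = 0$.

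Second, I would pass from $X$ to arbitrary $Y \cong X$. Any conjugacy $\phi : X \to Y$ is given by a sliding block code of some radius $r$, and a rectangle $A_1 \times \cdots \times A_n \subseteq L_n(Y)$ pulls back via $\phi^{-1}$ to a \emph{generalized rectangle} in $L_{n+2r}(X)$: a subset of the language whose coordinates, grouped into overlapping windows of width $2r+1$, may be chosen independently from certain prescribed sets. Enlarging the union bound to rule out generalized rectangles for each fixed radius $r$ (and then diagonalizing across $r$) yields $h_{\mathrm{ind}}(Y) = 0$ uniformly across all conjugate presentations.

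The main obstacle is exactly this second step: a rectangle in $Y$ need not correspond to a rectangle in $X$, only to a more complex ``independence-like'' structure, and the higher-block example above illustrates how recoding can manufacture rectangles that were invisible in the original presentation. Making the combinatorial bound precise and uniform across all conjugacy radii requires a careful formalization of generalized rectangles and a tight union bound that still leaves positive probability for $h(X) > 0$. Once the quantitative balance is in place, the argument follows a standard probabilistic/pigeonhole template, with the explicit constructions of \cite{LoMaPa13} serving as a blueprint.
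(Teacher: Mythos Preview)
The paper's proof is entirely different and far shorter. It does not construct anything probabilistically; instead it isolates a conjugacy-invariant obstruction. By a compactness argument, any subshift with positive independence entropy must contain two points differing in exactly one coordinate. The property ``$X$ contains two points that differ in only finitely many coordinates'' is manifestly preserved under sliding block codes, hence under conjugacy. Therefore it suffices to exhibit a positive-entropy subshift with \emph{no} pair of points differing in finitely many coordinates; such examples exist by a result of Meyerovitch \cite{Me19}. That is the whole proof.

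Your random-SFT plan has a genuine gap precisely at the step you flag as the obstacle. Once the random SFT $X$ is chosen (with forbidden words of a fixed length $N$), there is no ``diagonalization across $r$'' available: a single $X$ must simultaneously defeat conjugacies of \emph{every} radius. A union bound over all radii would have to control, for each $r$, a family of potential block codes and pulled-back ``generalized rectangles'' whose cardinality grows doubly exponentially in $r$, whereas all the randomness in $X$ lives at the fixed scale $N$. There is no mechanism in the proposal for making such a bound converge, and ``careful formalization'' is not a substitute for one.

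There is also a structural warning sign suggesting SFTs are simply the wrong class here. Every positive-entropy SFT \emph{does} contain points differing in finitely many coordinates (locality of the SFT constraint lets one splice two distinct long blocks with matching boundary windows), so SFTs never satisfy the conjugacy-invariant obstruction the paper exploits. Concretely, even the golden mean shift already has $h_{\mathrm{ind}}>0$ in its original presentation via the rectangle $\{0,1\}\times\{0\}\times\{0,1\}\times\{0\}\times\cdots$. This does not by itself prove your approach is impossible, but it indicates that the examples witnessing the proposition are necessarily non-finite-type, and that the route through a conjugacy-invariant dynamical property (absence of asymptotic pairs) is the natural one.
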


\begin{proof}
By a compactness argument, any subshift $X$ with positive independence entropy must contain two points which differ in only one position, i.e.,\ $\exists x, y \in X: x_0 \neq y_0 \wedge \forall i \neq 0: x_i = y_i$. Positive entropy does not imply the existence of a pair of points with finitely many differences, see \cite{Me19}, (Theorem~1.3), and clearly this property is preserved under conjugacy.
\end{proof}

\begin{proposition}
\label{prop:Countable}
For any function $f : \N \to \N$ satisfying $f(n) = o(a^n)$ for all $a > 1$, there exists a countable subshift $X \subset \{0,1\}^\N$ such that for all large enough $n$, the number of words of length $n$ in $X$ is at least $f(n)$.
\end{proposition}

\begin{proof}
The forbidden patterns of $X$ are described as follows: if a finite subword contains $n$ many $1$s, they must be pairwise separated by a distance of at least $m_n$. Here, $(m_n)_n$ is some nondecreasing sequence. The number of words of $X$ of length $n$ is at least $2^{\lfloor n/m_n \rfloor}$, and if $m_n$ grows slowly enough this stays above $f$. By the assumption on $f$ we can have $m_n \rightarrow \infty$, and it is easy to see that $X$ is then countable, since all its points have finite sum and finite subsets of $\N$ form a countable set.
\end{proof}

\begin{proposition}
\label{prop:NoSyndetic}
There exists a subshift $X \subset \{0,1\}^\N$ with positive entropy such that every point in $W(X)$ contains arbitrarily long subwords of the form $0^n$.
\end{proposition}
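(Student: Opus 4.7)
By the hereditarity of $W(X)$, our goal --- every $b \in W(X)$ having arbitrarily long $0^n$ subwords --- is equivalent to requiring that no syndetic point of $\{0,1\}^\N$ lies in $W(X)$. My plan is to take $X$ to be the orbit closure of a family of block-structured configurations: let $L_k = 2^k$ and let $Y_0$ be the set of all sequences of the form
\[
y = W_1 \cdot 0^{L_1} \cdot W_2 \cdot 0^{L_2} \cdot W_3 \cdot 0^{L_3} \cdots, \qquad W_k \in \{0,1\}^{L_k},
\]
and set $X = \overline{\bigcup_{n \geq 0} \sigma^n Y_0}$. Since the density of ``free'' positions in any $y \in Y_0$ is $\tfrac12$ and the choices within the $W_k$-blocks are independent, a direct count gives $h(X) \geq \tfrac12 \log 2 > 0$.

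The key observation is that the set $F \subseteq \N$ of forced-$0$ positions of an element of $Y_0$ is aperiodic: its gap sequence $L_1, L_2, L_3, \ldots$ consists of pairwise distinct values, so a sufficiently long window of $F$ uniquely determines its shift. Given a tree $(x^z)_{z \leq b}$ in $X$ of sufficient depth, the common prefixes then pin down a single phase for the block structure shared by all branches; on a forced-$0$ block of length $L_k$ (for large $k$) lying within the tree, every branch must equal $0$, so $b$ is $0$ throughout that block, giving a $0$-run of length $L_k \to \infty$.

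The main obstacle will be handling limit points of the orbit closure (such as $0^\infty$) that do not carry a well-defined phase, and making the ``common prefix pins down phase'' step rigorous: the agreement of two branches on their free positions might coincidentally match across different phases. A careful combinatorial argument using the distinctness of the $L_k$ values should handle this, or one may tweak the construction --- for instance by replacing the orbit closure with a more restrictive shift-invariant set, or by introducing distinguishing markers in the free blocks --- to force phase agreement to be detectable from a bounded amount of data.
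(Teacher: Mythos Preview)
Your construction has a fatal flaw: the orbit closure $X = \overline{\bigcup_{n \geq 0} \sigma^n Y_0}$ is the full shift $\{0,1\}^\N$. Since $|W_k| = L_k = 2^k \to \infty$, every word $u \in \{0,1\}^n$ occurs as a prefix of some block $W_k$ with $2^k > n$; shifting the corresponding $y \in Y_0$ to the start of that block exhibits $u$ in the language of $X$. Hence $W(X) = \{0,1\}^\N$, which contains $1^\infty$, so the conclusion fails outright. The limit points that ``do not carry a well-defined phase'' are not merely $0^\infty$ --- they are \emph{everything}, and the block structure is completely washed out by the closure. The difficulty you flag (phase detection from common prefixes) is therefore not the real obstacle; the real obstacle is that unbounded free blocks make the language unconstrained.

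Any repair must keep the free runs of \emph{bounded} length while still arranging unbounded forced-zero runs and a positive density of free positions; once you do that, you are close to what the paper does, though the paper takes a much shorter route. The paper observes that for a hereditary binary subshift one has $W(X) = X$: the inclusion $X \subseteq W(X)$ holds because for $b \in X$ the assignment $x^z := z$ (for $z \leq b$) is a tree in $X$ with branching structure $b$, and equality then follows since $X$ and $W(X)$ have the same number of words of each length. Thus it suffices to exhibit a hereditary $X \subset \{0,1\}^\N$ of positive entropy in which every point contains $0^n$ for all $n$, which is a standard construction. This completely avoids the problem of analysing trees inside a non-hereditary subshift, which is where your argument was heading into trouble.
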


\begin{proof}
If $X$ itself is hereditary, then $W(W(X)) = W(X)$, so it suffices to find a hereditary $X$ with positive entropy such that every point $x \in X$ contains arbitrarily long subwords of the form $0^n$. For this, it clearly suffices to find a binary subshift $Y$ where every point contains the subword $0^n$ for arbitrarily large $n$, but some configuration contains $1$s with positive lower asymptotic density, as the smallest hereditary subshift containing $Y$ then has the claimed property: entropy comes from independently flipping the positive density sequence of $1$s, and taking the hereditary closure can clearly only add $0^n$-subwords to points. The existence of such $Y$ is folklore, see e.g.\ \cite{Sa20}, (Example~7.3), for an explicit construction.
\end{proof}

\bibliographystyle{plain}
\bibliography{../../../../../bib/bib}{}

% Please provide either the correct journal abbreviation (e.g. according to the “List of Title Word Abbreviations” http://www.issn.org/services/online-services/access-to-the-ltwa/) or the full name of the journal.
% Citations and References in Supplementary files are permitted provided that they also appear in the reference list here. 

%=====================================
% References, variant A: external bibliography
%=====================================
%\externalbibliography{yes}
%\bibliography{your_external_BibTeX_file}

%=====================================
% References, variant B: internal bibliography

\begin{comment}%=====================================

\end{comment}

\end{document}